\newtheoremstyle{erdfn}
  {}
  {}
  {\itshape}
  {}
  {\fontseries{bx}\selectfont\itshape}
  {--}
  { }
  {}
\newtheoremstyle{erthm}
  {}
  {}
  {\itshape}
  {}
  {\fontseries{bx}\selectfont\itshape}
  {--}
  { }
  {}
\newtheoremstyle{errem}
  {}
  {}
  {}
  {}
  {\ttfamily\itshape}
  {--}
  { }
  {}
\theoremstyle{erthm}
\newtheorem{thm}{Theorem}
\newtheorem{lem}[thm]{Lemma}
\newtheorem{cor}[thm]{Corollary}
\newtheorem{prop}[thm]{Proposition}
\theoremstyle{erdfn}
\theoremstyle{errem}
\newtheorem*{rem}{Remark}
 \newlength{\h@uteurnumerateur}
 \newlength{\h@uteurdenominateur}
\newcommand*{\quotientdroite}[2]{\mathop{
  \mathchoice%
  {
    \settoheight{\h@uteurnumerateur}{\ensuremath{\displaystyle{#1#2}}}%
    \settoheight{\h@uteurdenominateur}{\ensuremath{\displaystyle{#1#2}}}%
    \raisebox{0.5\h@uteurnumerateur}{\ensuremath{\displaystyle{#1}}}%
    \mkern-5mu\diagup\mkern-4mu%
    \raisebox{-0.5\h@uteurdenominateur}{\ensuremath{\displaystyle{#2}}}%
  }
  {
    \settoheight{\h@uteurnumerateur}{\ensuremath{\textstyle{#1#2}}}%
    \settoheight{\h@uteurdenominateur}{\ensuremath{\textstyle{#1#2}}}%
  \raisebox{0.2\h@uteurnumerateur}{\ensuremath{\textstyle{#1}}}%
  /%
  \raisebox{-0.2\h@uteurdenominateur}{\ensuremath{\textstyle{#2}}}%
  }
  {
    \settoheight{\h@uteurnumerateur}{\ensuremath{\scriptstyle{#1#2}}}%
    \settoheight{\h@uteurdenominateur}{\ensuremath{\scriptstyle{#1#2}}}%
  \raisebox{0.2\h@uteurnumerateur}{\ensuremath{\scriptstyle{#1}}}%
  /%
  \raisebox{-0.2\h@uteurdenominateur}{\ensuremath{\scriptstyle{#2}}}%
  }
  {
    \settoheight{\h@uteurnumerateur}{\ensuremath{\scriptscriptstyle{#1#2}}}%
    \settoheight{\h@uteurdenominateur}{\ensuremath{\scriptscriptstyle{#1#2}}}%
  \raisebox{0.2\h@uteurnumerateur}{\ensuremath{\scriptscriptstyle{#1}}}%
  /%
  \raisebox{-0.2\h@uteurdenominateur}{\ensuremath{\scriptscriptstyle{#2}}}%
  }
}}
\newcommand*{\quotientgauche}[2]{
  \mathchoice%
  {
    \settoheight{\h@uteurnumerateur}{\ensuremath{\displaystyle{#1#2}}}%
    \settoheight{\h@uteurdenominateur}{\ensuremath{\displaystyle{#1#2}}}%
    \raisebox{-0.5\h@uteurnumerateur}{\ensuremath{\displaystyle{#1}}}%
    \mkern-3mu\diagdown\mkern-5mu%
    \raisebox{0.5\h@uteurdenominateur}{\ensuremath{\displaystyle{#2}}}%
  }
  {
    \settoheight{\h@uteurnumerateur}{\ensuremath{\textstyle{#1#2}}}%
    \settoheight{\h@uteurdenominateur}{\ensuremath{\textstyle{#1#2}}}%
  \raisebox{-0.2\h@uteurnumerateur}{\ensuremath{\textstyle{#1}}}%
  \backslash%
  \raisebox{0.2\h@uteurdenominateur}{\ensuremath{\textstyle{#2}}}%
  }
  {
    \settoheight{\h@uteurnumerateur}{\ensuremath{\scriptstyle{#1#2}}}%
    \settoheight{\h@uteurdenominateur}{\ensuremath{\scriptstyle{#1#2}}}%
  \raisebox{-0.2\h@uteurnumerateur}{\ensuremath{\scriptstyle{#1}}}%
  \backslash%
  \raisebox{0.2\h@uteurdenominateur}{\ensuremath{\scriptstyle{#2}}}%
  }
  {
    \settoheight{\h@uteurnumerateur}{\ensuremath{\scriptscriptstyle{#1#2}}}%
    \settoheight{\h@uteurdenominateur}{\ensuremath{\scriptscriptstyle{#1#2}}}%
  \raisebox{-0.2\h@uteurnumerateur}{\ensuremath{\scriptscriptstyle{#1}}}%
  \backslash%
  \raisebox{0.2\h@uteurdenominateur}{\ensuremath{\scriptscriptstyle{#2}}}%
  }
}
\DeclarePairedDelimiter\abs{\lvert}{\rvert}
\newcommand*{\Af}{\mathcal{F}_D}
\newcommand*{\be}{\beta}
\newcommand*{\ClK}{\mathcal{C\ell}_{\K}}
\newcommand*{\idK}{\mathcal{I}_{\K}}
\newcommand*{\E}{\mathcal{E}_D}
\newcommand*{\F}{\mathbbmss{F}}
\newcommand*{\J}[2]{\left(\frac{\vphantom{X}#1}{\vphantom{X}#2}\right)}
\newcommand*{\K}{\mathbbmss{K}}
\newcommand*{\N}{\mathbbmss{N}}
\newcommand*{\prK}{\mathcal{P}_{\K}}
\newcommand*{\Q}{\mathbbmss{Q}}
\newcommand*{\reK}{\mathbbmss{K}^{\#}}
\DeclareMathOperator{\rk}{Rank}
\newcommand*{\rkq}{\rk_4}
\title{Spiegelungssatz: a combinatorial proof for the \(4\)-rank}
\keywords{4-rank, {Spiegelungssatz}, combinatorial interpretation, reflection principle}
\thanks{This research was partially supported by ANR grant \emph{Modunombres}.}
\thanks{We would like to thank \'Etienne Fouvry for having introduced us to this problem.}
\subjclass[2010]{11R29,11R11,11A15,11T24,05E15}
\author[Laurent Habsieger]{Laurent Habsieger}
\address{Universit\'e de Lyon \\
CNRS \\
Universit\'e Lyon 1 \\
INSA  \\
Ecole Centrale de Lyon \\
UMR5208, Institut Camille Jordan \\
43 blvd du 11 novembre 1918 \\
F-69622 Villeurbanne-Cedex, France}
\email{laurent.habsieger@math.univ-lyon1.fr}
\author[Emmanuel Royer]{Emmanuel Royer}
\address{%
Emmanuel Royer\\
Clermont Universit\'e\\
Universit\'e Blaise Pascal\\
Laboratoire de math\'ematiques\\
BP 10448\\
F-63000 Clermont-Ferrand\\
France %
}
\curraddr{%
Emmanuel Royer\\
Universit\'e Blaise Pascal\\
Laboratoire de math\'ematiques\\
Les C\'ezeaux\\
BP 80026\\
F-63171 Aubi\`ere Cedex\\
France %
}
\email{{emmanuel.royer@math.univ-bpclermont.fr}}
\begin{document}
\mathtoolsset{showonlyrefs,mathic}
\begin{abstract}
The  \foreignlanguage{german}{Spiegelungssatz} is an inequality between the \(4\)-ranks of the narrow ideal class groups of the quadratic fields \(\Q(\sqrt{D})\) and \(\Q(\sqrt{-D})\). We provide a combinatorial proof of this inequality. Our interpretation gives an affine system of equations that allows to describe precisely some equality cases. %
\end{abstract}
\maketitle
\tableofcontents%
\section*{Introduction} %
Let \(\K\) be a quadratic field. Let \(\idK\) be the multiplicative group of fractional nonzero ideals of the ring of integers of \(\K\) and \(\prK\) be the subgroup of principal fractional ideals. We consider the subgroup \(\prK^+\) of \(\prK\), whose elements are the ones generated by an element with positive norm. The narrow class group \(\ClK^+\) of \(\K\) is the quotient \(\quotientdroite{\idK}{\prK^+}\). If \(\K\) is imaginary, this is the usual class group \(\ClK\coloneqq\quotientdroite{\idK}{\prK}\) whereas if \(\K\) is real, the group \(\ClK\) is a quotient of \(\ClK^+\). We have \(\ClK^+=\ClK\) if and only if the fundamental unit of \(\K\) has
norm \(-1\). Otherwise, the cardinalities of these two groups differ by a factor \(2\).  For more details about the relations between \(\ClK\) and \(\ClK^+\) we refer to~\cite[Section 3.1]{MR2726105}. The narrow class-group being finite, we can define its \(p^k\)-rank for any power of a prime number \(p^k\) by %
\[%
 \rk_{p^k}(\K)\coloneqq\dim_{\F_p}\quotientdroite{\left(\ClK^+\right)^{p^{k-1}}}{\left(\ClK^+\right)^{p^{k}}}. %
\]
In other words, \(\rk_{p^k}(\K)\) is the number of elementary divisors of \(\ClK^+\) divisible by \(p^k\). %

If \(\K=\Q(\sqrt{\Delta})\), the \emph{reflection} of \(\K\) is the quadratic field \(\reK\coloneqq\Q(\sqrt{-\Delta})\). Assume that \(\K\) is totally real, in \cite[Th\'eor\`emes II.9 and II.10]{MR0280466}, Damey \& Payan proved the following inequality (the so called \emph{\foreignlanguage{german}{Spiegelungssatz}} for the \(4\)-rank, see~\cite{MR0096633}): %
\[%
 \rkq(\K)\leq\rkq(\reK)\leq\rkq(\K)+1. %
\]
In this article, we provide a combinatorial proof of this \emph{\foreignlanguage{german}{Spiegelungssatz}} using expressions involving character sums due to Fouvry \& Kl\"uners \cite{MR2276261}. The letter \(D\) will always denote a positive, odd, squarefree integer. %

Let \(d_{\K}\) be the discriminant of the real quadratic field \(\K\) and \(d_{\K}^{\#}\) be the discriminant of the imaginary quadratic field \(\reK\). The usual computation of the discriminant allows to consider three families of quadratic fields. This families are described table~\ref{tab_link}. 

\begin{table}[h]
 \begin{tabular}{|Sc||Sc|Sc|Sc|} %
  \hline %
  \(d_{\K}\) & \(1\pmod{4}\) & \(0\pmod{8}\) & \(4\pmod{8}\)\\ \hline\hline %
  \(d_{\K}\) & \(D\) & \(8D\) & \(4D\)\\ \hline %
  \(d_{\K}^{\#}\) & \(-4D\) & \(-8D\) & \(-D\)\\ \hline %
  \(d_{\K}^{\#}\)    & \(4\pmod{8}\) & \(0\pmod{8}\) & \(1\pmod{4}\)\\ \hline %
  \(D  \)    & \(1\pmod{4}\) & & \(-1\pmod{4}\)\\ \hline %
  \(\K \)     & \(\Q(\sqrt{D})\) & \(\Q(\sqrt{2D})\) & \(\Q(\sqrt{D})\)\\ \hline %
 \end{tabular}
\caption{Link between \(D\), \(d_{\K}\) and their reflections.}
\label{tab_link}
\end{table} 

We introduce for any integers \(u\) and \(v\) coprime with \(D\) the cardinality %
\[%
 \E(u,v)\coloneqq\#\{(a,b)\in\N^2\colon D=ab,\, ua\equiv\square\pmod{b},\, vb\equiv\square\pmod{a}\}
\]
where \(x\equiv\square\pmod{y}\) means that \(x\) is the square of an integer modulo \(y\). Using table~\ref{tab_link}, we find in \cite{MR2276261} (where what the authors note \(D\) is what we note \(d_{\K}\) or \(d_{\K}^{\#}\)) the following expressions for the \(4\)-rank of \(\K\) and \(\reK\). %
\begin{enumerate}[\indent 1)]
 \item If \(d_{\K}\equiv 1\pmod{4}\), then %
\begin{equation*}
 2^{\rkq(\K)}=\frac{1}{2}\E(-1,1) %
\end{equation*}
\cite[Lemma 27]{MR2276261} %
and %
\begin{equation*}
 2^{\rkq(\reK)}=\frac{1}{2}\left(\E(1,1)+\E(2,2)\right) %
\end{equation*}
\cite[Lemma 40]{MR2276261} with \(D\equiv 1\pmod{4}\). %
 \item If \(d_{\K}\equiv 0\pmod{8}\), then %
\begin{equation*}
 2^{\rkq(\K)}=\frac{1}{2}\left(\E(-2,1)+\E(-1,2)\right) %
\end{equation*}
\cite[Lemma 38]{MR2276261} %
and %
\begin{equation*}
 2^{\rkq(\reK)}=\E(2,1) %
\end{equation*}
\cite[Lemma 33]{MR2276261}. %
 \item If \(d_{\K}\equiv 4\pmod{8}\), then %
\begin{equation*}
 2^{\rkq(\K)}=\frac{1}{2}\left(\E(-1,1)+\E(-2,2)\right) %
\end{equation*}
\cite[Lemma 42]{MR2276261} %
and %
\begin{equation*}
 2^{\rkq(\reK)}=\frac{1}{2}\E(1,1) %
\end{equation*}
\cite[Lemma 16]{MR2276261} with \(D\equiv 3\pmod{4}\). 
\end{enumerate}
\begin{rem}%
These expressions of \(2^{\rkq(\K)}\) and \(2^{\rkq(\reK)}\) either have one term or are a sum of two terms. In case they have one term, it can not be zero and this term is a power of \(2\). In case they are sum of two terms, we will show that each of these terms is either zero or a power of two ; then considering the solutions of the equation \(2^a=2^b+2^c\), we see that either one term (and only one) is zero or the two terms are equal. %
\end{rem}
To prove Damey \& Payan \emph{\foreignlanguage{german}{Spiegelungssatz}}, we have then to prove the three following inequalities. %
\begin{enumerate}[\indent 1)]%
 \item If \(D\equiv 1\pmod{4}\) then %
\begin{equation}\label{eq_DPun}%
 \E(-1,1)\leq\E(1,1)+\E(2,2)\leq2\E(-1,1). %
\end{equation}
 \item For any \(D\),  %
\begin{equation}\label{eq_DPdeux}%
 \E(-2,1)+\E(-1,2)\leq 2\E(2,1)\leq2\E(-2,1)+2\E(-1,2). %
\end{equation}
\item If \(D\equiv 3\pmod{4}\) then %
\begin{equation}\label{eq_DPtrois}%
 \E(-1,1)+\E(-2,2)\leq \E(1,1)\leq2\E(-1,1)+2\E(-2,2). %
\end{equation}
\end{enumerate}

In section~\ref{sec_charsum}, we establish a formula for \(\E(u,v)\) involving Jacobi characters. We average this formula over a group of order \(8\) generated by three permutations. We deduce properties for \(\E(u,v)\) from this formula. In section~\ref{sec_affine}, we give an interpretation of \(\E(u,v)\) in terms of the cardinality of an affine space. In particular, this shows that \(\E(u,v)\) is either \(0\) or a power of \(2\). Finally, in section~\ref{sec_spiegel}, we combine the character sum interpretation with the affine interpretation to deduce the \emph{\foreignlanguage{german}{Spiegelungssatz}}. We also prove the equality cases found by Uehara~\cite[Theorem 2]{MR987569} and give a new one.%
\section{A character sum}\label{sec_charsum}%
Denote by \(\J{m}{n}\) the Jacobi symbol of \(m\) and \(n\), for any coprime odd integers \(m\) and \(n\). The letter \(p\) will always denote a prime number. For any integers \(s\), \(t\), \(u\) and \(v\) coprime with \(D\), we introduce the sum %
\[%
 \sigma_D(s,t,u,v)=\sum_{ab=D}\prod_{p\mid b}\left(\J{s}{p}+\J{ua}{p}\right)\prod_{p\mid a}\left(\J{t}{p}+\J{vb}{p}\right). %
\]
We have %
\[%
 \sigma_D(1,1,u,v)=\sum_{ab=D}\prod_{p\mid b}\left(1+\J{ua}{p}\right)\prod_{p\mid a}\left(1+\J{vb}{p}\right)\eqqcolon S_D(u,v). %
\]
This last sum is nonnegative and related to our problem by the easy equality %
\begin{equation}\label{eq_lienES}%
 \E(u,v)=2^{-\omega(D)}S_D(u,v) %
\end{equation}
where \(\omega(D)\) stands for the number of prime divisors of \(D\). The aim of this section is to establish some properties of \(\sigma_D\). %

We note the symmetry relation %
\begin{equation}\label{eq_sym}%
 \sigma_D(s,t,u,v)=\sigma_D(t,s,v,u) %
\end{equation}
which gives \(S_D(u,v)=S_D(v,u)\). The factorisation %
\begin{equation}\label{eq_facto}%
 \sigma_D(s,t,u,v)=\sum_{ab=D}\J{s}{b}\J{t}{a}\prod_{p\mid b}\left(1+\J{sua}{p}\right)\prod_{p\mid a}\left(1+\J{tvb}{p}\right) %
\end{equation}
implies the upper bound %
\begin{equation}\label{eq_huitL}
 \abs*{\sigma_D(s,t,u,v)}\leq S_D(su,tv). %
\end{equation}
Finally, we shall use the elementary formula %
\begin{equation}\label{eq_fmag}%
 2(-1)^{xy+yz+zx}=(-1)^x+(-1)^y+(-1)^z-(-1)^{x+y+z} %
\end{equation}
valid for any integers \(x,y\) and \(z\). %

We introduce the element \(\be(n)\in\F_2\) by %
\[%
 \J{-1}{n}=(-1)^{\be(n)}.
\]
If \(m\) and \(n\) are coprime, the multiplicativity of the Jacobi symbol gives \(\be(m)+\be(n)=\be(mn)\). With this notation the quadratic reciprocity law reads %
\begin{equation}\label{eq_recqua}%
 \J{m}{n}\J{n}{m}=(-1)^{\be(m)\be(n)}. %
\end{equation}
We shall combine~\eqref{eq_fmag} and~\eqref{eq_recqua} to get the linearisation formula %
\[%
 2\J{x}{y}\J{y}{z}\J{z}{x}\J{x}{z}\J{z}{y}\J{y}{x}=\J{-1}{x}+\J{-1}{y}+\J{-1}{z}-\J{-1}{xyz}. %
\]

\begin{lem}\label{lem_septL}
 For any integers \(s,t,u,v\) coprime with \(D\), the following equality %
\[%
 \sigma_D(s,t,u,v)=\sum_{abcd=D}(-1)^{\be(c)\be(d)}\J{a}{d}\J{b}{c}\J{s}{b}\J{t}{a}\J{u}{d}\J{v}{c} %
\]
holds. %
\end{lem}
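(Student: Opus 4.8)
The plan is to prove the identity by a direct expansion of the two products. For a fixed factorization \(D=ab\), squarefreeness of \(D\) (hence of \(b\)) together with multiplicativity of the Jacobi symbol gives
\[
 \prod_{p\mid b}\left(\J{s}{p}+\J{ua}{p}\right)=\sum_{b=b_0d}\J{s}{b_0}\J{ua}{d},
\qquad
 \prod_{p\mid a}\left(\J{t}{p}+\J{vb}{p}\right)=\sum_{a=a_0c}\J{t}{a_0}\J{vb}{c},
\]
the sums running over the ordered factorizations of \(b\) and of \(a\); here \(d\) collects the primes on which the term \(\J{ua}{p}\) was chosen, and \(c\) the primes on which \(\J{vb}{p}\) was chosen. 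Renaming \(a_0,b_0\) as \(a,b\) after this split, so that the original divisors become \(ac\) and \(bd\), and inserting these expansions into the definition of \(\sigma_D\), one obtains
\[
 \sigma_D(s,t,u,v)=\sum_{abcd=D}\J{s}{b}\J{t}{a}\,\J{uac}{d}\,\J{vbd}{c},
\]
the sum now being over all factorizations of \(D\) into four (automatically pairwise coprime and odd) positive parts.

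Next I would expand the last two symbols by multiplicativity in the numerator, \(\J{uac}{d}=\J{u}{d}\J{a}{d}\J{c}{d}\) and \(\J{vbd}{c}=\J{v}{c}\J{b}{c}\J{d}{c}\), so that the general summand becomes
\[
 \J{s}{b}\J{t}{a}\J{u}{d}\J{v}{c}\cdot\J{a}{d}\J{b}{c}\cdot\J{c}{d}\J{d}{c}.
\]
Since \(c\) and \(d\) are coprime odd positive integers, the reciprocity law~\eqref{eq_recqua} gives \(\J{c}{d}\J{d}{c}=(-1)^{\be(c)\be(d)}\); substituting this identity into every summand yields precisely the asserted formula.

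The argument is essentially bookkeeping, using only multiplicativity of the Jacobi symbol and~\eqref{eq_recqua} (the linearisation formula stated just before the lemma is not needed here). All Jacobi symbols that occur are legitimate because \(a,b,c,d\) are odd and \(s,t,u,v\) are coprime to \(D\), hence to each of \(a,b,c,d\). The one point deserving care is the labelling: one must keep track of the fact that \(d\) is the part assembled from the primes where \(\J{ua}{p}\) was selected and \(c\) the part assembled from the primes where \(\J{vb}{p}\) was selected, so that after expansion the only factor surviving beyond \(\J{s}{b}\J{t}{a}\J{u}{d}\J{v}{c}\J{a}{d}\J{b}{c}\) is the reciprocity pair \(\J{c}{d}\J{d}{c}\), and not some other combination such as \(\J{a}{c}\J{c}{a}\).
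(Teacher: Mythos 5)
Your proof is correct and follows essentially the same route as the paper: both expand the two products over primes into sums over divisors (using squarefreeness and multiplicativity of the Jacobi symbol), reindex over factorizations \(D=abcd\), and apply the reciprocity law \eqref{eq_recqua} to the single pair \(\J{c}{d}\J{d}{c}\). The only cosmetic difference is that the paper first extracts \(\J{s}{b}\J{t}{a}\) via the factorisation \eqref{eq_facto} before expanding, whereas you expand the raw products directly; the bookkeeping is otherwise identical.
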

\begin{proof}%
 By bimultiplicativity of the Jacobi symbol, equation~\eqref{eq_facto} gives %
\[%
 \sigma_D(s,t,u,v)=\sum_{ab=D}\J{s}{b}\J{t}{a}\sum_{d\mid b}\J{usa}{d}\sum_{c\mid a}\J{tvb}{c}. %
\]
By the change of variables \((a,b,c,d)=(\alpha\gamma,\beta\delta,\gamma,\delta)\), we get %
\[%
 \sigma_D(s,t,u,v)=\sum_{D=\alpha\beta\gamma\delta}\J{s}{\beta}\J{u}{\delta}\J{v}{\gamma}\J{t}{\alpha}\J{\gamma}{\delta}\J{\delta}{\gamma}\J{\alpha}{\delta}\J{\beta}{\gamma}
\]
and we conclude using the quadratic reciprocity law~\eqref{eq_recqua} to \(\J{\gamma}{\delta}\J{\delta}{\gamma}\). %
\end{proof}
To build symmetry, we average the formula in lemma~\ref{lem_septL} over an order \(8\) group, namely the group generated by three permutations: the permutation \((a,d)\), the permutation \((b,c)\) and the permutation \(\left((a,b),(d,c)\right)\). The quadratic reciprocity law allows to factorise the term \((-1)^{\be(c)\be(d)}\J{a}{d}\J{b}{c}\) in every transformed sum and then to see \(u\) and \(v\) as describing the action of each permutation. %
\begin{prop}\label{prop_Spermu}%
For any integers \(s,t,u,v\) coprime with \(D\), the following equality %
\begin{align*}%
8S_D(u,v) &= \sum_{abcd=D}(-1)^{\be(c)\be(d)}\J{a}{d}\J{b}{c}\times \\%
\Bigl[2\J{u}{d}\J{v}{c} %
&+ \J{u}{a}\J{v}{c}\left(\J{-1}{a}+\J{-1}{c}+\J{-1}{d}-\J{-1}{acd}\right)\\
&+ \J{u}{d}\J{v}{b}\left(\J{-1}{b}+\J{-1}{c}+\J{-1}{d}-\J{-1}{bcd}\right)\\
&+ \J{u}{a}\J{v}{b}\left(1+\J{-1}{ac}+\J{-1}{bd}-\J{-1}{D}\right)\Bigr]. %
\end{align*}
holds. %
\end{prop}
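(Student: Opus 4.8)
The plan is to start from the expression of \(S_D(u,v)=\sigma_D(1,1,u,v)\) provided by Lemma~\ref{lem_septL},
\[
S_D(u,v)=\sum_{abcd=D}(-1)^{\be(c)\be(d)}\J{a}{d}\J{b}{c}\J{u}{d}\J{v}{c},
\]
and to apply to it, one after another, the eight changes of dummy variables induced by the permutations \(g\) of the group \(G=\langle(a\,d),(b\,c),(a\,b)(c\,d)\rangle\) acting on the four factors \(a,b,c,d\). Reindexing by any such \(g\) is a bijection of the set of factorisations \(abcd=D\), so every one of these substitutions leaves the number \(S_D(u,v)\) unchanged; adding the eight resulting identities therefore produces \(8S_D(u,v)\) on the left. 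The whole point is to arrange, on the right, that the eight summands reassemble into the four bracketed blocks of the statement.

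For a given \(g\), after the substitution the kernel \(\J{a}{d}\J{b}{c}\) becomes a product of two Jacobi symbols with permuted entries and \((-1)^{\be(c)\be(d)}\) changes accordingly; using the reciprocity law~\eqref{eq_recqua} one rewrites this as \((-1)^{\be(c)\be(d)}\J{a}{d}\J{b}{c}\) times a sign \((-1)^{Q}\), where \(Q\) is a quadratic expression in the \(\be\)'s. The four ``basic'' permutations behave as follows: \(g=\mathrm{id}\) keeps \(\J{u}{d}\J{v}{c}\) with \(Q=0\); \(g=(a\,d)\) produces \(\J{u}{a}\J{v}{c}\) with \(Q=\be(a)\be(c)+\be(c)\be(d)+\be(d)\be(a)\); \(g=(b\,c)\) produces \(\J{u}{d}\J{v}{b}\) with \(Q=\be(b)\be(c)+\be(c)\be(d)+\be(d)\be(b)\); and \(g=(a\,d)(b\,c)\) produces \(\J{u}{a}\J{v}{b}\) with, after grouping the four cross terms and using that \(\be\) is additive on coprime arguments, \(Q=\be(ac)\be(bd)\). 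The remaining four permutations are obtained from these by composing with \((a\,b)(c\,d)\); by the symmetry relation~\eqref{eq_sym} each reproduces the same value of \(Q\) but with \(u\) and \(v\) interchanged, so the four left cosets of the order-two subgroup \(\langle(a\,b)(c\,d)\rangle\) in \(G\) carry precisely the four shapes of \(Q\) just listed.

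Next I would linearise each sign by formula~\eqref{eq_fmag}: taking \((x,y,z)\) equal to \((\be(a),\be(c),\be(d))\), then \((\be(b),\be(c),\be(d))\), then \((0,\be(ac),\be(bd))\), and using \(\J{-1}{m}=(-1)^{\be(m)}\) together with \(\be(m)+\be(n)=\be(mn)\) and \(abcd=D\), the three nontrivial quantities \(2(-1)^{Q}\) become respectively \(\J{-1}{a}+\J{-1}{c}+\J{-1}{d}-\J{-1}{acd}\), then \(\J{-1}{b}+\J{-1}{c}+\J{-1}{d}-\J{-1}{bcd}\), then \(1+\J{-1}{ac}+\J{-1}{bd}-\J{-1}{D}\). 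Summing the eight identities and grouping them by the four cosets of \(\langle(a\,b)(c\,d)\rangle\), each coset contributes \(2S_D(u,v)\), and at the level of the full sum one may freely replace \(\J{u}{x}\J{v}{y}+\J{u}{y}\J{v}{x}\) by \(2\J{u}{x}\J{v}{y}\) (the two halves give the same total, again by the \(u\leftrightarrow v\) symmetry). This turns the four coset contributions into the four bracketed blocks of the statement, each multiplied against \(\sum_{abcd=D}(-1)^{\be(c)\be(d)}\J{a}{d}\J{b}{c}\), and the total is \(8S_D(u,v)\).

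The only genuine difficulty is the bookkeeping: one must be scrupulous about the direction of each reindexing so that the correct pair \(\J{u}{\cdot},\J{v}{\cdot}\) survives, and one must verify that the ``diagonal'' permutation \((a\,d)(b\,c)\) really yields the factorised exponent \(\be(ac)\be(bd)\) — it is exactly this factorisation that lets~\eqref{eq_fmag}, applied with one slot equal to \(0\), deliver the symmetric block \(1+\J{-1}{ac}+\J{-1}{bd}-\J{-1}{D}\) rather than some less pleasant combination.
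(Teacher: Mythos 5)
Your proof is correct and takes essentially the same route as the paper: reindex the sum of Lemma~\ref{lem_septL} by the permutations \((a\,d)\), \((b\,c)\) and \((a\,d)(b\,c)\), restore the kernel \((-1)^{\be(c)\be(d)}\J{a}{d}\J{b}{c}\) via quadratic reciprocity, and linearise the resulting signs \((-1)^{Q}\) with formula~\eqref{eq_fmag}, taking one slot equal to \(0\) in the diagonal case \(Q=\be(ac)\be(bd)\). The only cosmetic difference is that the paper reaches the factor \(8\) by counting the identity term twice and adding three identities each worth \(2S_D(u,v)\), whereas you sum over all eight elements of the dihedral group and pair them into cosets of \(\langle(a\,b)(c\,d)\rangle\) using the \(u\leftrightarrow v\) symmetry~\eqref{eq_sym}; the underlying computation is identical.
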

\begin{proof}%
 From lemma~\ref{lem_septL} follows %
\begin{equation}\label{eq_deper}%
 S_D(u,v)=\sum_{abcd=D}(-1)^{\be(c)\be(d)}\J{a}{d}\J{b}{c}\J{u}{d}\J{v}{c}. %
\end{equation}
We permute \(a\) and \(d\) and use the quadratic reciprocity law~\eqref{eq_recqua} to obtain %
\begin{multline*}%
 S_D(u,v)=\sum_{abcd=D}(-1)^{\be(c)\be(d)}\J{a}{d}\J{b}{c}\J{u}{a}\J{v}{c}\\\times(-1)^{\be(c)\be(d)+\be(d)\be(a)+\be(a)\be(c)}. %
\end{multline*}
Formula~\eqref{eq_fmag} gives %
\begin{multline}\label{eq_permun}%
 2S_D(u,v)=\sum_{abcd=D}(-1)^{\be(c)\be(d)}\J{a}{d}\J{b}{c}\J{u}{a}\J{v}{c}\\\times%
\left(\J{-1}{a}+\J{-1}{c}+\J{-1}{d}-\J{-1}{acd}\right). %
\end{multline}
Similary, we permute \(b\) and \(c\), then use the quadratic reciprocity law~\eqref{eq_recqua} and formula~\eqref{eq_fmag} to get 
\begin{multline}\label{eq_permdeux}%
 2S_D(u,v)=\sum_{abcd=D}(-1)^{\be(c)\be(d)}\J{a}{d}\J{b}{c}\J{u}{d}\J{v}{b}\\\times%
\left(\J{-1}{b}+\J{-1}{c}+\J{-1}{d}-\J{-1}{bcd}\right). %
\end{multline}
Finally, we permute \((a,b)\) and \((b,c)\), apply twice the quadratic reciprocity law~\eqref{eq_recqua} to get  %
\begin{multline}
 2S_D(u,v)=\sum_{abcd=D}(-1)^{\be(c)\be(d)}\J{a}{d}\J{b}{c}\J{u}{a}\J{v}{b}\\\times%
(-1)^{\be(c)\be(d)+\be(b)\be(a)+\be(a)\be(d)+\be(b)\be(c)}. %
\end{multline}
Since \(\be(c)\be(d)+\be(b)\be(a)+\be(a)\be(d)+\be(b)\be(c)=\be(ac)\be(bd)\), using formula~\eqref{eq_fmag} with \(z=0\) we get
\begin{multline}\label{eq_permtrois}%
 2S_D(u,v)=\sum_{abcd=D}(-1)^{\be(c)\be(d)}\J{a}{d}\J{b}{c}\J{u}{a}\J{v}{b}\\\times%
\left(1+\J{-1}{ac}+\J{-1}{bd}-\J{-1}{D}\right). %
\end{multline}
We obtain the result by adding twice~\eqref{eq_deper} with the sum of~\eqref{eq_permun}, \eqref{eq_permdeux} and~\eqref{eq_permtrois}. %
\end{proof} 
When two expressions are equivalent under the action of the symmetry group, we get an identity. We give two such formulas in the next two corollaries.
\begin{cor}\label{cor_dun}%
 If \(D\equiv 1\pmod{4}\) then \(S_D(-1,1)=S_D(1,1)\). %
\end{cor}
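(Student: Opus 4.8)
The plan is to read off both $S_D(1,1)$ and $S_D(-1,1)$ directly from Proposition~\ref{prop_Spermu} and to check that, under the hypothesis $D\equiv1\pmod4$, the two resulting sums coincide summand by summand over the decompositions $abcd=D$.

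First I would record what the hypothesis provides: $D\equiv1\pmod4$ says precisely $\J{-1}{D}=1$, that is $\be(D)=0$. Since $abcd=D$ in every term, $\be(a)+\be(b)+\be(c)+\be(d)=0$ in $\F_2$, hence the three identities $\J{-1}{acd}=\J{-1}{b}$, $\J{-1}{bcd}=\J{-1}{a}$ and $\J{-1}{ac}=\J{-1}{bd}$; I also use freely that $\J{-1}{x}^2=1$ for every $x$ coprime with $D$. Then I would specialise Proposition~\ref{prop_Spermu} to $u=v=1$: all the factors $\J{1}{\cdot}$ disappear, and after substituting the identities above the bracket
\[
2+\Bigl(\J{-1}{a}+\J{-1}{c}+\J{-1}{d}-\J{-1}{acd}\Bigr)+\Bigl(\J{-1}{b}+\J{-1}{c}+\J{-1}{d}-\J{-1}{bcd}\Bigr)+\Bigl(1+\J{-1}{ac}+\J{-1}{bd}-\J{-1}{D}\Bigr)
\]
collapses: the $\pm\J{-1}{a}$ and $\pm\J{-1}{b}$ terms cancel, and what is left is $2\bigl(1+\J{-1}{c}+\J{-1}{d}+\J{-1}{ac}\bigr)$.

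Next I would run the same Proposition with $u=-1$, $v=1$, so that now $\J{u}{d}=\J{-1}{d}$, $\J{u}{a}=\J{-1}{a}$ while every $\J{v}{\cdot}=1$. Expanding the bracket and using $\J{-1}{x}^2=1$ together with the three identities, every term that is not one of $1$, $\J{-1}{c}$, $\J{-1}{d}$, $\J{-1}{ac}$ occurs twice with opposite signs, and the bracket is again $2\bigl(1+\J{-1}{c}+\J{-1}{d}+\J{-1}{ac}\bigr)$. Hence the two displayed expressions for $8S_D(1,1)$ and $8S_D(-1,1)$ furnished by Proposition~\ref{prop_Spermu} have identical summands, and the corollary follows.

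The only delicate point is the bookkeeping in this last step: one must track products such as $\J{-1}{a}\J{-1}{ac}=\J{-1}{c}$, $\J{-1}{a}\J{-1}{bd}=\J{-1}{abd}=\J{-1}{c}$ and $\J{-1}{b}\J{-1}{d}=\J{-1}{bd}=\J{-1}{ac}$, together with the consequences $\J{-1}{a}\J{-1}{c}=\J{-1}{b}\J{-1}{d}$ and $\J{-1}{a}\J{-1}{b}=\J{-1}{c}\J{-1}{d}$ of $\be(D)=0$, so that the leftover cross terms $\J{-1}{ac}+\J{-1}{bd}-\J{-1}{ab}+\J{-1}{cd}$ collapse to $2\J{-1}{ac}$. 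No ingredient beyond Proposition~\ref{prop_Spermu} and the parity relation $\be(D)=0$ is needed.
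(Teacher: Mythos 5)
Your computation is correct (I checked both brackets reduce to $2\bigl(1+\J{-1}{c}+\J{-1}{d}+\J{-1}{ac}\bigr)$ when $\be(D)=0$), and it is essentially the paper's argument: both read $8S_D(1,1)$ and $8S_D(-1,1)$ off Proposition~\ref{prop_Spermu} and compare summands. The only difference is organizational — the paper first factors the difference for \emph{arbitrary} $D$ as $\bigl(1-\J{-1}{D}\bigr)\bigl(1+\J{-1}{b}\bigr)\bigl(1+\J{-1}{c}\bigr)$ times the common weight (its identity~\eqref{eq_difSD}, which it then reuses in Corollary~\ref{cor_egFdpm}), whereas you impose $\J{-1}{D}=1$ from the outset.
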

\begin{proof}%
 For any \(D\), we obtain from proposition~\ref{prop_Spermu} the formula %
\begin{multline}\label{eq_difSD}%
 8\left(S_D(1,1)-S_D(-1,1)\right)=\sum_{abcd=D}(-1)^{\be(c)\be(d)}\J{a}{d}\J{b}{c}\times\\ %
\left(1-\J{-1}{D}\right)\left(1+\J{-1}{b}\right)\left(1+\J{-1}{c}\right). %
\end{multline}
This gives the result since \(\J{-1}{D}=1\) if \(D\equiv 1\pmod{4}\). %
\end{proof}
\begin{cor}\label{cor_egFdpm}%
 If \(D\equiv{3}\pmod{4}\) then \(S_D(1,1)=2S_D(-1,1)\). %
\end{cor}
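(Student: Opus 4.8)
The plan is to follow the strategy of Corollary~\ref{cor_dun}, feeding the hypothesis \(D\equiv 3\pmod 4\) (equivalently \(\J{-1}{D}=-1\)) into the permutation identities obtained inside the proof of Proposition~\ref{prop_Spermu}. I would begin from \eqref{eq_permun} specialised at \(u=v=1\), namely
\[
 2S_D(1,1)=\sum_{abcd=D}(-1)^{\be(c)\be(d)}\J{a}{d}\J{b}{c}\left(\J{-1}{a}+\J{-1}{c}+\J{-1}{d}-\J{-1}{acd}\right).
\]
On the summation set, multiplicativity of the Jacobi symbol gives \(\J{-1}{acd}\J{-1}{b}=\J{-1}{D}=-1\), hence \(\J{-1}{acd}=-\J{-1}{b}\); the parenthesis therefore becomes the fully symmetric quantity \(\J{-1}{a}+\J{-1}{b}+\J{-1}{c}+\J{-1}{d}\), and \(2S_D(1,1)\) splits as the sum of the four twisted sums \(T_x\coloneqq\sum_{abcd=D}(-1)^{\be(c)\be(d)}\J{a}{d}\J{b}{c}\J{-1}{x}\) for \(x\) running over \(a,b,c,d\).

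The key step is then to check that, under \(D\equiv 3\pmod 4\), each of these four sums equals \(S_D(-1,1)\). For \(x=d\) and \(x=c\) this is immediate from \eqref{eq_deper} with \((u,v)=(-1,1)\) and \((u,v)=(1,-1)\), together with the symmetry \(S_D(u,v)=S_D(v,u)\) noted after \eqref{eq_sym}. For \(x=a\) and \(x=b\) I would invoke \eqref{eq_permtrois}: since \(abcd=D\) we have \(\J{-1}{ac}\J{-1}{bd}=\J{-1}{D}=-1\), so \(\J{-1}{ac}\) and \(\J{-1}{bd}\) carry opposite signs and \(\J{-1}{ac}+\J{-1}{bd}=0\); together with \(1-\J{-1}{D}=2\) this collapses the four-term correction factor in \eqref{eq_permtrois} to the constant \(2\), so that \(S_D(u,v)=\sum_{abcd=D}(-1)^{\be(c)\be(d)}\J{a}{d}\J{b}{c}\J{u}{a}\J{v}{b}\). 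Specialising to \((u,v)=(-1,1)\) and \((u,v)=(1,-1)\) (once more using \(S_D(v,u)=S_D(u,v)\)) gives \(T_a=T_b=S_D(-1,1)\).

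Summing the four identifications yields \(2S_D(1,1)=4S_D(-1,1)\), that is \(S_D(1,1)=2S_D(-1,1)\). I do not expect any genuine obstacle: the computation is short, and the only subtlety is that each replacement using the constraint \(abcd=D\)—turning \(\J{-1}{acd}\) into \(-\J{-1}{b}\), making \(\J{-1}{ac}+\J{-1}{bd}\) vanish, and reading off \(\J{-1}{D}=-1\)—is made term by term inside the sum, hence is legitimate. The one point worth emphasising is that the hypothesis \(D\equiv 3\pmod 4\) is used twice for genuinely distinct purposes, once to symmetrise the parenthesis coming out of \eqref{eq_permun} and once to trivialise the correction factor in \eqref{eq_permtrois}, and it is this double role that makes all four twisted sums collapse onto \(S_D(-1,1)\).
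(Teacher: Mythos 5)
Your argument is correct. Every step checks out: \eqref{eq_permun} at \((u,v)=(1,1)\) does give \(2S_D(1,1)\) as the sum over \(abcd=D\) of the base weight times \(\J{-1}{a}+\J{-1}{c}+\J{-1}{d}-\J{-1}{acd}\); the multiplicativity relation \(\J{-1}{acd}\J{-1}{b}=\J{-1}{D}=-1\) legitimately symmetrises the bracket to \(\J{-1}{a}+\J{-1}{b}+\J{-1}{c}+\J{-1}{d}\); the identifications \(T_d=S_D(-1,1)\) and \(T_c=S_D(1,-1)\) follow from \eqref{eq_deper}, while \(T_a\) and \(T_b\) follow from \eqref{eq_permtrois} once its correction factor collapses to \(2\) (using \(\J{-1}{ac}\J{-1}{bd}=-1\)); and the symmetry \(S_D(1,-1)=S_D(-1,1)\) from \eqref{eq_sym} closes the count \(2S_D(1,1)=4S_D(-1,1)\).

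Your route is genuinely different in its assembly from the paper's, even though it draws on the same permutation identities. The paper never touches \eqref{eq_permun} or \eqref{eq_permtrois} individually here: it applies the fully averaged formula of Proposition~\ref{prop_Spermu} to \(S_D(1,-1)\), subtracts the identity \eqref{eq_difSD} recorded in the proof of Corollary~\ref{cor_dun}, and shows that \(-8\left(S_D(1,1)-S_D(-1,1)-S_D(1,-1)\right)\) carries an overall factor \(1+\J{-1}{D}\), valid for \emph{all} \(D\), which vanishes only at the end under \(D\equiv 3\pmod 4\). You instead inject the congruence from the start, which lets you bypass the proposition and the auxiliary difference formula entirely and read the answer off as a four-term decomposition \(2S_D(1,1)=T_a+T_b+T_c+T_d\) with each piece equal to \(S_D(-1,1)\). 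The paper's version buys a universal identity (useful for seeing why the complementary case \(D\equiv 1\pmod 4\) behaves differently, and structurally parallel to Corollary~\ref{cor_dun}); yours buys a shorter and more transparent computation in the case at hand, at the cost of not producing an identity valid for general \(D\). Both are sound; your observation that the hypothesis is used twice, for two distinct simplifications, is accurate and is precisely what the factor \(1+\J{-1}{D}\) packages in the paper's single identity.
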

\begin{proof}%
 For any \(D\), proposition~\ref{prop_Spermu} gives %
\begin{multline*}%
 8S_D(1,-1)=\sum_{abcd=D}(-1)^{\be(c)\be(d)}\J{a}{d}\J{b}{c}\Bigl[%
2+\J{-1}{b}+2\J{-1}{c}+\\ \J{-1}{d}+\J{-1}{ac}+\J{-1}{bd}+\J{-1}{bc}-\J{-1}{ad}+\J{-1}{abc}-\J{-1}{acd} %
\Bigr]. %
\end{multline*}
Thanks to~\eqref{eq_difSD}, we deduce for any \(D\) the equality %
\begin{multline*}%
-8\left(S_D(1,1)-S_D(-1,1)-S_D(1,-1)\right) = \sum_{abcd=D}(-1)^{\be(c)\be(d)}\J{a}{d}\J{b}{c}\times \\%
\Biggl[1+\J{-1}{c}+\J{-1}{d}+\J{-1}{ac}+\J{-1}{bd}+\J{-1}{abc}+\J{-1}{abd}+\J{-1}{D}\Biggr]. %
\end{multline*}
It follows that %
\begin{multline*}%
-8\left(S_D(1,1)-S_D(-1,1)-S_D(1,-1)\right) = \sum_{abcd=D}(-1)^{\be(c)\be(d)}\J{a}{d}\J{b}{c}\times \\%
\left(1+\J{-1}{D}\right)\left(1+\J{-1}{c}+\J{-1}{d}+\J{-1}{ac}\right). %
\end{multline*}
This finishes the proof since \(\J{-1}{D}=-1\) if \(D\equiv 3\pmod{4}\). %
\end{proof}
Finally, after having dealt with equalities, we shall need the following inequalities. %
\begin{lem}\label{lem_ineq}%
 For any \(D\), for any \(u\) coprime with \(D\), the following inequalities
\[%
 S_D(u,1)\leq S_D(-u,1)+S_D(u,-1)\leq 2S_D(u,1) %
\]
hold. %
\end{lem}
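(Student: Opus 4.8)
The plan is to work with $S_D$ directly (the inequality is equivalent to the one for $\E$ by~\eqref{eq_lienES}), pass to the character-sum form of $S_D$ given by Lemma~\ref{lem_septL}, split the sums according to the parities of the $\be$-values, and extract the two one-sided bounds from the master estimate~\eqref{eq_huitL}.

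First I would use $S_D(w,v)=\sigma_D(1,1,w,v)$ and Lemma~\ref{lem_septL} to write
\[ S_D(w,v)=\sum_{abcd=D}(-1)^{\be(c)\be(d)}\J ad\J bc\J wd\J vc . \]
Specialising $(w,v)$ to the four pairs $(\pm u,\pm1)$ and collecting the summands by $(\be(c),\be(d))\in\F_2^2$, set $T_{\varepsilon\eta}=\sum\J ad\J bc\J ud$ over $abcd=D$ with $\be(c)=\varepsilon$, $\be(d)=\eta$. Then $S_D(u,1)=T_{00}+T_{01}+T_{10}-T_{11}$, $S_D(-u,1)=T_{00}-T_{01}+T_{10}+T_{11}$, $S_D(u,-1)=T_{00}+T_{01}-T_{10}+T_{11}$ and $S_D(-u,-1)=T_{00}-T_{01}-T_{10}-T_{11}$, so that
\[ S_D(-u,1)+S_D(u,-1)=2(T_{00}+T_{11}),\qquad S_D(u,1)+S_D(-u,-1)=2(T_{00}-T_{11}). \]
Since the four numbers $S_D(\pm u,\pm1)$ are nonnegative, $|T_{01}|,|T_{10}|,|T_{11}|\le T_{00}$, and the two inequalities of the lemma become the single chain $2T_{11}\le T_{01}+T_{10}\le T_{00}+3T_{11}$.

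Next, this chain is not formally implied by the four nonnegativities $S_D(\pm u,\pm1)\ge0$ (the $T_{\varepsilon\eta}$ are just a linear reshuffling of the $S_D$'s, and these nonnegativities are compatible with values of the $T$'s that break the chain), so a genuinely new input is needed. I would draw it from~\eqref{eq_huitL} applied to sums $\sigma_D(s,t,u',v')$ having $s$ or $t$ equal to $-1$: these bring in a dependence on $\be(a)$ or $\be(b)$, which a short calculation with~\eqref{eq_recqua} and $\be(a)+\be(b)+\be(c)+\be(d)=\be(D)$ converts, up to the constant $\J{-1}{D}$, into the ``$\be(b)$-odd parts'' of the $S_D$'s. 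For instance $\sigma_D(-1,1,-u,1)=\sum_{abcd=D}(-1)^{\be(c)\be(d)}\J ad\J bc\J ud(-1)^{\be(b)+\be(d)}$ and~\eqref{eq_huitL} gives $\abs*{\sigma_D(-1,1,-u,1)}\le S_D(u,1)$; likewise $\abs*{\sigma_D(-1,1,u,1)}\le S_D(-u,1)$, $\abs*{\sigma_D(-1,1,u,-1)}\le S_D(-u,-1)$ and $\abs*{\sigma_D(1,-1,u,1)}\le S_D(u,-1)$. Forming $\tfrac12\bigl(S_D(w,v)\pm\sigma\bigr)$ with the matching $\sigma$, each $S_D(\pm u,\pm1)$ splits into two nonnegative halves that are again $S_D$-type expressions in finer block sums, now also indexed by $\be(b)$; iterating this refinement down to one parity bit per prime of $D$, the block coefficients become $0$ or $\pm2^{\omega(D)}$ and the chain $2T_{11}\le T_{01}+T_{10}\le T_{00}+3T_{11}$ drops out.

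The hard part will be precisely this last step — organising the iterated parity refinement and controlling the signs that the repeated use of~\eqref{eq_recqua} (and~\eqref{eq_fmag}) produces. Conceptually this refinement is the affine-space picture of $\E$ that the paper develops afterwards: $S_D(-u,1)+S_D(u,-1)$ is the size of a linear preimage $\{x:\Phi x\in\langle\delta\rangle\}$ and $S_D(u,1)$ is the size of a kernel that differs from it by at most a line, which makes the chain transparent; reproducing this with character sums alone is the laborious obstacle, and I expect it is where most of the proof's length goes.
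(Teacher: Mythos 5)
Your reduction is correct and genuinely clarifying: with \(T_{\varepsilon\eta}\) the block sums of \eqref{eq_deper} according to \((\be(c),\be(d))\), the two inequalities of the lemma are exactly \(2T_{11}\le T_{01}+T_{10}\) and \(T_{01}+T_{10}\le T_{00}+3T_{11}\), and you are right that these do not follow from the nonnegativity of the four \(S_D(\pm u,\pm1)\) alone, so that \eqref{eq_huitL} applied to sums \(\sigma_D(s,t,\cdot,\cdot)\) with \(s\) or \(t\) equal to \(-1\) (which detect the parities \(\be(a)\), \(\be(b)\)) must be brought in. But this is precisely where your argument stops being a proof: the step ``iterating this refinement down to one parity bit per prime of \(D\) \dots the chain drops out'' is not an argument, and as described it is not even the right mechanism --- the relevant refinement is by the four parities \(\be(a),\be(b),\be(c),\be(d)\), not by one bit per prime of \(D\), and nothing in your sketch identifies \emph{which} linear combination of the bounds \(\abs{\sigma_D(s,t,u',v')}\le S_D(su',tv')\) produces the chain. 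Since you yourself flag this step as ``where most of the proof's length goes,'' the essential content of the lemma is missing from the proposal.

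What the paper actually does is a single, finite symmetrization rather than an iteration. Averaging the identity of lemma~\ref{lem_septL} over the order-\(8\) group generated by the transpositions \((a\,d)\), \((b\,c)\) and \((a\,b)(d\,c)\) yields the closed formula of proposition~\ref{prop_Spermu} for \(8S_D(u,v)\). Substituting \((u,v)=(-u,1)\), \((u,-1)\) and \((u,1)\) there and expanding each bracket with \eqref{eq_xyzabc}, every resulting term is recognised as either an \(S_D\) or a \(\sigma_D\) of lemma~\ref{lem_septL}, with coefficients \(2\) or \(1\pm\J{-1}{D}\), hence nonnegative; a term-by-term application of \eqref{eq_huitL} then gives \(8\left(S_D(-u,1)+S_D(u,-1)\right)\le 4\left(2S_D(u,1)+S_D(u,-1)+S_D(-u,1)\right)\) and \(8S_D(u,1)\le 4\left(S_D(u,1)+S_D(u,-1)+S_D(-u,1)\right)\), which are the two asserted inequalities. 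I would also caution against your closing affine-space heuristic: by lemma~\ref{lem_cardi} the spaces \(\Af(-u,1)\) and \(\Af(u,-1)\) do share a direction (same matrix, right-hand sides \(\beta_{-u}\) and \(\beta_u\)), but \(\Af(u,1)\) is cut out by a \emph{different} matrix, whose diagonal entries are shifted by \(\beta_{-1}(p_i)\); so the claim that the two systems differ ``by at most a line'' is unjustified as stated, and indeed the paper deliberately proves this lemma by character sums and reserves the affine interpretation for corollaries~\ref{cor_tard} and~\ref{cor_plustard}, where the directions genuinely coincide.
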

\begin{proof}%
We prove first the inequality %
\begin{equation}\label{eq_majoam}%
 S_D(-u,1)+S_D(u,-1)\leq 2S_D(u,1). %
\end{equation}
With proposition~\ref{prop_Spermu}, we write %
\begin{multline}%
 8\left(S_D(-u,1)+S_D(u,-1)\right)=\sum_{abcd=D}(-1)^{\be(c)\be(d)}\J{a}{d}\J{b}{c}\times\\ %
\Biggl[2\J{u}{d}\left(1+\J{-1}{c}+\J{-1}{d}+\J{-1}{bd}\right)\\ %
+\J{u}{a}\Biggl(2+\J{-1}{a}+\J{-1}{b}+\J{-1}{c}+\J{-1}{d}+2\J{-1}{ac}\\+\J{-1}{abd}+\J{-1}{abc}-\J{-1}{acd}-\J{-1}{bcd}\Biggl)%
\Biggr]. %
\end{multline}
Using %
\begin{equation}\label{eq_xyzabc}%
\J{-1}{xyz}=\J{-1}{D}\J{-1}{t} %
\end{equation}
for any \(\{x,y,z,t\}=\{a,b,c,d\}\) together with~\eqref{eq_deper} and lemma~\ref{lem_septL} we deduce %
\begin{multline*}%
 8\left(S_D(-u,1)+S_D(u,-1)\right)=2\left(S_D(u,1)+S_D(u,-1)+S_D(-u,1)\right)\\
+2\left(\sigma_D(-1,1,-u,1)+\sigma_D(1,u,1,1)+\sigma_D(1,-u,1,-1)\right)\\%
+\left(1-\J{-1}{D}\right)\left(\sigma_D(1,-u,1,1)+\sigma_D(-1,u,1,1)\right)\\%
+\left(1+\J{-1}{D}\right)\left(\sigma_D(1,u,1,-1)+\sigma_D(1,u,-1,1)\right). %
\end{multline*}
Since \(1-\J{-1}{D}\) and \(1+\J{-1}{D}\) are nonnegative, the upper bound~\eqref{eq_huitL} gives %
\[%
 8\left(S_D(-u,1)+S_D(u,-1)\right)\leq 4\left(2S_D(u,1)+S_D(u,-1)+S_D(-u,1)\right) %
\]
hence~\eqref{eq_majoam}. We prove next the inequality %
\begin{equation}\label{eq_minoam}%
 S_D(u,1)\leq S_D(-u,1)+S_D(u,-1). %
\end{equation}
As for~\eqref{eq_majoam}, we use equation~\eqref{eq_xyzabc}, proposition~\ref{prop_Spermu}, equation~\eqref{eq_deper} and lemma~\ref{lem_septL} to get %
\begin{multline*}
 8S_D(u,1)=2S_D(u,1)+S_D(u,-1)+S_D(-u,1)\\ %
+\sigma_D(1,-u,1,1)+\sigma_D(1,u,1,-1)+\sigma_D(1,u,-1,1)+\sigma_D(-1,1,u,1)\\ %
+\left(1+\J{-1}{D}\right)\sigma_D(1,-u,1,-1)+\left(1-\J{-1}{D}\right)\sigma_D(1,u,1,1)\\%
-\J{-1}{D}\bigl(\sigma_D(-1,u,1,1)+\sigma_D(1,-1,u,1)\bigr). %
\end{multline*}
Then~\eqref{eq_huitL} leads to %
\[%
 8S_D(u,1)\leq4\left(S_D(u,1)+S_D(u,-1)+S_D(-u,1)\right) %
\]
hence~\eqref{eq_minoam}. %
\end{proof}
\section{An affine interpretation}\label{sec_affine}
We write \(p_1<\dotsm<p_{\omega(D)}\) for the prime divisors of \(D\) and define a bijection between the set of divisors \(a\) of \(D\) and the set of sequences \((x_i)_{1\leq i\leq\omega(D)}\) in \(\F_2^{\omega(D)}\) by %
\[%
 x_i=\begin{cases}%
      1 & \text{if \(p_i\mid a\)}\\
      0 & \text{otherwise}.
     \end{cases}
\]
Let \(a\) and \(b\) satisfy \(D=ab\) and \(u\) and \(v\) two integers coprime with \(D\). We extend the notation of the previous section writing %
\[%
 \J{a}{b}=(-1)^{\alpha(a,b)}=(-1)^{\beta_a(b)} %
\]
with \(\alpha(a,b)=\beta_a(b)\in\F_2\). The condition that \(vb\) is a square modulo \(a\) is equivalent to \(\J{vb}{p}=1\) for any prime divisor \(p\) of \(a\), that is %
\[%
\J{v}{p_i}\prod_{j\colon x_j=0}\J{p_j}{p_i}=1 %
\]
for any \(i\) such that \(x_i=1\). With our notation, this gives %
\[%
 \forall i,\, x_i=1\Longrightarrow (-1)^{\beta_v(p_i)}(-1)^{\sum_{j\colon x_j=0}\alpha(p_j,p_i)}=1. %
\]
We rewrite it %
\[%
 \forall i,\, x_i=1\Longrightarrow (-1)^{\beta_v(p_i)}(-1)^{\sum_{j\neq i}(1-x_j)\alpha(p_j,p_i)}=1 %
\]
and so %
\begin{equation}\label{eq_condun}%
  \forall i,\, x_i\beta_v(p_i)+\sum_{j\neq i}x_i(1-x_j)\alpha(p_j,p_i)=0. %
\end{equation}
Similary, the condition that \(ua\) is a square modulo \(b\) is equivalent to %
\begin{equation}\label{eq_conddeux}%
  \forall i,\, (1-x_i)\beta_u(p_i)+\sum_{j\neq i}(1-x_i)x_j\alpha(p_j,p_i)=0. %
\end{equation}
Since \(x_i\) is either \(0\) or \(1\), equations \eqref{eq_condun} et \eqref{eq_conddeux} are equivalent to their sum. We deduce the following lemma. %
\begin{lem}\label{lem_cardi}%
 The cardinality \(\E(u,v)\) is the cardinality of the affine space \(\Af(u,v)\) in \(\F_2^{\omega(D)}\) of equations %
\[%
 \left(\beta_u(p_i)+\beta_v(p_i)+\sum_{j\neq i}\alpha(p_j,p_i)\right)x_i+\sum_{j\neq i}\alpha(p_j,p_i)x_j=\beta_u(p_i) %
\]
for all \(i\in\{1,\dotsc,\omega(D)\}\).
\end{lem}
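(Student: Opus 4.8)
The statement is basically the conclusion of the computation carried out right before it, so the plan is to assemble that computation cleanly. First I would use the bijection $a\leftrightarrow(x_i)_{1\le i\le\omega(D)}$ to rewrite $\E(u,v)$: since $b=D/a$ is determined by $a$, the set counted by $\E(u,v)$ is in bijection with the set of $(x_i)\in\F_2^{\omega(D)}$ such that $vb\equiv\square\pmod{a}$ and $ua\equiv\square\pmod{b}$. By the discussion preceding the lemma, the first congruence is exactly equation~\eqref{eq_condun} and the second is exactly equation~\eqref{eq_conddeux}.

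Next I would reduce, index by index, the pair~\eqref{eq_condun},~\eqref{eq_conddeux} to a single equation: their sum over $\F_2$. The key observation is that every term of~\eqref{eq_condun} carries the factor $x_i$ while every term of~\eqref{eq_conddeux} carries the factor $1-x_i$; since $x_i\in\{0,1\}$, for each value of $x_i$ one of the two equations is the tautology $0=0$, and the other then coincides term by term with the sum of the two. Hence imposing both~\eqref{eq_condun} and~\eqref{eq_conddeux} for all $i$ is equivalent to imposing, for every $i$, the single equation obtained by adding them.

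It then remains to perform that addition and simplify in $\F_2$. The diagonal part gives $x_i\beta_v(p_i)+(1-x_i)\beta_u(p_i)=\beta_u(p_i)+\bigl(\beta_u(p_i)+\beta_v(p_i)\bigr)x_i$, and for $j\neq i$ the coefficient of $\alpha(p_j,p_i)$ becomes $x_i(1-x_j)+(1-x_i)x_j=x_i+x_j$, the cross term $2x_ix_j$ vanishing in characteristic $2$. Collecting the coefficient of $x_i$ and the coefficients of the $x_j$, $j\neq i$, and moving the constant term to the right-hand side yields precisely
\[
\left(\beta_u(p_i)+\beta_v(p_i)+\sum_{j\neq i}\alpha(p_j,p_i)\right)x_i+\sum_{j\neq i}\alpha(p_j,p_i)x_j=\beta_u(p_i),
\]
which is the $i$-th defining equation of $\Af(u,v)$. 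As this is a system of affine-linear equations over $\F_2$, its set of solutions is an affine subspace of $\F_2^{\omega(D)}$ (empty if the system is inconsistent), and by the above its cardinality equals $\E(u,v)$.

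I do not expect a genuine obstacle: the argument is essentially bookkeeping together with the $\F_2$-identity $x_i(1-x_j)+(1-x_i)x_j=x_i+x_j$. The one point that deserves an explicit line is the passage from the pair~\eqref{eq_condun},~\eqref{eq_conddeux} to their sum, which rests on $x_i$ taking only the values $0$ and $1$, so that for each such value one of the two equations is vacuous while the other is exactly the sum.
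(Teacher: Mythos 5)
Your proposal is correct and follows essentially the same route as the paper: the bijection between divisors and elements of \(\F_2^{\omega(D)}\), the identification of the two square conditions with equations~\eqref{eq_condun} and~\eqref{eq_conddeux}, and the reduction to their sum using the fact that \(x_i\in\{0,1\}\) makes one of the two equations vacuous for each index. Your explicit justification of that last reduction and the \(\F_2\)-simplification \(x_i(1-x_j)+(1-x_i)x_j=x_i+x_j\) is exactly the content the paper leaves as "we deduce the following lemma."
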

\begin{rem}
 In particular, lemma~\ref{lem_cardi} shows that \(\E(u,v)\) if not zero is a power of \(2\), the power being the dimension of the direction of \(\Af(u,v)\). This is not \emph{a priori} obvious. %
\end{rem}
\begin{rem}
 This interpretation slightly differs from the one found by Redei~\cite{0009.05101,MR759260}. The matrix with coefficients in \(\F_2\) associated to our affine space is \((a_{ij})_{1\leq i,j\leq\omega(D)}\) with %
\[%
 a_{ij}=\begin{dcases}%
         \alpha(p_j,p_i) & \text{ if \(i\neq j\)}\\
	 \beta_u(p_i)+\beta_v(p_i)+\sum_{\ell\neq i}\alpha(p_\ell,p_i) & \text{ if \(i=j\)}
        \end{dcases}
\]
whereas the matrix considered by Redei is \((\widetilde{a}_{ij})_{1\leq i,j\leq\omega(D)}\) with %
\[%
 \widetilde{a}_{ij}=\begin{dcases}%
         \alpha(p_j,p_i) & \text{ if \(i\neq j\)}\\
	 \omega(D)+1+\sum_{\ell\neq i}\alpha(p_\ell,p_i) & \text{ if \(i=j\).}
        \end{dcases}
\]
\end{rem}

\begin{cor}\label{cor_tard}%
 For any \(D\), we have \(S_D(1,1)\neq 0\) and, either \(S_D(2,2)=0\) or \(S_D(2,2)=S_D(1,1)\). %
\end{cor}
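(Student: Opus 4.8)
The plan is to read off both assertions from the affine description in Lemma~\ref{lem_cardi}, combined with~\eqref{eq_lienES}, which says $S_D(u,v)=2^{\omega(D)}\E(u,v)$; in particular $S_D(u,v)=0$ precisely when $\Af(u,v)=\emptyset$, and otherwise $S_D(u,v)$ depends only on $\dim\Af(u,v)$.

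\emph{First assertion.} I would begin by noting that $\J{1}{p_i}=1$ gives $\beta_1(p_i)=0$ for every $i$, so by Lemma~\ref{lem_cardi} the linear system defining $\Af(1,1)$ is homogeneous: its equations are $\bigl(\sum_{j\neq i}\alpha(p_j,p_i)\bigr)x_i+\sum_{j\neq i}\alpha(p_j,p_i)x_j=0$. Hence the zero vector lies in $\Af(1,1)$ — equivalently, the pair $(a,b)=(1,D)$ satisfies the conditions in the definition of $\E(1,1)$ — so $\E(1,1)\geq 1$ and $S_D(1,1)=2^{\omega(D)}\E(1,1)\neq 0$. Writing $A=(a_{ij})$ for the matrix over $\F_2$ with $a_{ij}=\alpha(p_j,p_i)$ for $i\neq j$ and $a_{ii}=\sum_{j\neq i}\alpha(p_j,p_i)$, one records moreover that $\Af(1,1)=\ker A$, hence $\E(1,1)=2^{\dim\ker A}$.

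\emph{Second assertion.} The key observation is that $\J{2}{p_i}^2=1$ forces $\beta_2(p_i)+\beta_2(p_i)=0$ in $\F_2$, so the diagonal coefficient $\beta_u(p_i)+\beta_v(p_i)+\sum_{j\neq i}\alpha(p_j,p_i)$ appearing in Lemma~\ref{lem_cardi} takes, for $(u,v)=(2,2)$, the same value $\sum_{j\neq i}\alpha(p_j,p_i)$ as for $(u,v)=(1,1)$; the off-diagonal coefficients $\alpha(p_j,p_i)$ involve neither $u$ nor $v$. Thus $\Af(2,2)$ is the solution set of $Ax=c$ for the very same matrix $A$ and the vector $c=(\beta_2(p_i))_{1\leq i\leq\omega(D)}$. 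Such a system over $\F_2$ is either inconsistent, in which case $\Af(2,2)=\emptyset$ and $S_D(2,2)=0$, or its solution set is a coset of $\ker A=\Af(1,1)$, of cardinality $\E(1,1)$, so that $S_D(2,2)=2^{\omega(D)}\E(2,2)=2^{\omega(D)}\E(1,1)=S_D(1,1)$.

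No step is a genuine obstacle; the only thing to state with care is that the affine systems attached to $(1,1)$ and $(2,2)$ share the same linear part, which is exactly the characteristic-$2$ identity $\beta_2(p_i)+\beta_2(p_i)=0$. One could bypass Lemma~\ref{lem_cardi} for the first assertion by exhibiting $(1,D)$ directly in the definition of $\E(1,1)$, and for the second by observing that the systems~\eqref{eq_condun}--\eqref{eq_conddeux} for $(2,2)$ and for $(1,1)$ differ only in their constant terms; but going through the lemma is cleaner and immediately supplies the comparison of dimensions.
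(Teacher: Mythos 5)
Your proof is correct and follows essentially the same route as the paper: both descriptions come from Lemma~\ref{lem_cardi}, both observe that $\beta_2(p_i)+\beta_2(p_i)=0$ makes the linear parts of the systems for $(1,1)$ and $(2,2)$ coincide, and both conclude via a witness point in $\Af(1,1)$ (you use the zero vector, i.e.\ $(a,b)=(1,D)$; the paper uses $(1,\dotsc,1)$ — either works). No discrepancies worth noting.
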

\begin{proof}%
 The affine space \(\Af(2,2)\) has equations %
\[%
 \left(\sum_{j\neq i}\alpha(p_j,p_i)\right)x_i+\sum_{j\neq i}\alpha(p_j,p_i)x_j=\beta_2(p_i) %
\]
for all \(i\in\{1,\dotsc,\omega(D)\}\). The affine space \(\Af(1,1)\) has equations %
\[%
 \left(\sum_{j\neq i}\alpha(p_j,p_i)\right)x_i+\sum_{j\neq i}\alpha(p_j,p_i)x_j=0 %
\]
for all \(i\in\{1,\dotsc,\omega(D)\}\). Hence, both spaces have the same direction, and same dimension. The space \(\Af(1,1)\) is not empty: it contains \((1,\dotsc,1)\). Its cardinality is then \(2^{\dim_{\F_2}\Af(1,1)}\). The affine space \(\Af(2,2)\) might be empty and, if it is not, then its cardinality is \(2^{\dim_{\F_2}\Af(2,2)}=2^{\dim_{\F_2}\Af(1,1)}\). It follows that \(\E(1,1)\neq 0\) and, either \(\E(2,2)=0\) or \(\E(2,2)=\E(1,1)\). We finish the proof thanks to~\eqref{eq_lienES}. %
\end{proof}
\begin{cor}\label{cor_plustard}%
 For any \(D\), we have \(S_D(-1,1)\neq 0\) and, either \(S_D(-2,2)=0\) or \(S_D(-2,2)=S_D(-1,1)\). %
\end{cor}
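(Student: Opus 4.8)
The plan is to proceed as in the proof of Corollary~\ref{cor_tard}, using Lemma~\ref{lem_cardi} to compare the two affine spaces $\Af(-1,1)$ and $\Af(-2,2)$. Since $\J{-1}{p_i}=(-1)^{\be(p_i)}$ and $\J{1}{p_i}=1$, the space $\Af(-1,1)$ is cut out by
\[
\left(\be(p_i)+\sum_{j\neq i}\alpha(p_j,p_i)\right)x_i+\sum_{j\neq i}\alpha(p_j,p_i)x_j=\be(p_i)\qquad(1\leq i\leq\omega(D)).
\]
Writing $\beta_{-2}(p_i)$ and $\beta_2(p_i)$ for the elements of $\F_2$ defined by $\J{-2}{p_i}=(-1)^{\beta_{-2}(p_i)}$ and $\J{2}{p_i}=(-1)^{\beta_2(p_i)}$, multiplicativity of the Jacobi symbol gives $\beta_{-2}(p_i)=\be(p_i)+\beta_2(p_i)$, so $\beta_{-2}(p_i)+\beta_2(p_i)=\be(p_i)$ in $\F_2$, and $\Af(-2,2)$ is cut out by
\[
\left(\be(p_i)+\sum_{j\neq i}\alpha(p_j,p_i)\right)x_i+\sum_{j\neq i}\alpha(p_j,p_i)x_j=\be(p_i)+\beta_2(p_i)\qquad(1\leq i\leq\omega(D)).
\]
Hence the two systems have the same matrix, so $\Af(-1,1)$ and $\Af(-2,2)$ have the same direction and the same dimension whenever both are nonempty.

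The next step is to observe that $\Af(-1,1)$ is nonempty: substituting $x=(1,\dots,1)$ makes the left-hand side of the $i$-th equation equal to $\be(p_i)+2\sum_{j\neq i}\alpha(p_j,p_i)=\be(p_i)$, which is the right-hand side (equivalently, the pair $(a,b)=(D,1)$ always lies in the set counted by $\E(-1,1)$). Therefore $\E(-1,1)=2^{\dim_{\F_2}\Af(-1,1)}\neq0$, and~\eqref{eq_lienES} gives $S_D(-1,1)\neq0$. As for $\Af(-2,2)$, it is either empty, in which case $\E(-2,2)=0$, or nonempty, in which case $\E(-2,2)=2^{\dim_{\F_2}\Af(-2,2)}=2^{\dim_{\F_2}\Af(-1,1)}=\E(-1,1)$; applying~\eqref{eq_lienES} once more yields $S_D(-2,2)=0$ or $S_D(-2,2)=S_D(-1,1)$.

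I do not expect any genuine obstacle. The only point needing care is the cancellation $\beta_{-2}(p_i)+\beta_2(p_i)=\be(p_i)$ on the diagonal of the defining matrix, which is exactly what forces the matrices of $\Af(-1,1)$ and $\Af(-2,2)$ to coincide — the precise analogue of the trivial identity $\beta_1(p_i)+\beta_1(p_i)=0$ used implicitly in Corollary~\ref{cor_tard}.
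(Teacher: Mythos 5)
Your proof is correct and follows essentially the same route as the paper: Lemma~\ref{lem_cardi} shows that \(\Af(-1,1)\) and \(\Af(-2,2)\) are cut out by the same matrix (via \(\beta_{-2}(p_i)+\beta_2(p_i)=\beta_{-1}(p_i)\)), the point \((1,\dotsc,1)\) shows \(\Af(-1,1)\neq\emptyset\), and the conclusion follows from~\eqref{eq_lienES}. No issues.
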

\begin{proof}%
 Since \(\beta_{-2}(p_i)+\beta_{2}(p_i)=\beta_{-1}(p_i)\), the affine space \(\Af(-2,2)\) has equations %
\[%
 \left(\beta_{-1}(p_i)+\sum_{j\neq i}\alpha(p_j,p_i)\right)x_i+\sum_{j\neq i}\alpha(p_j,p_i)x_j=\beta_{-2}(p_i) %
\]
for all \(i\in\{1,\dotsc,\omega(D)\}\). The affine space \(\Af(-1,1)\) has equations %
\[%
 \left(\beta_{-1}(p_i)+\sum_{j\neq i}\alpha(p_j,p_i)\right)x_i+\sum_{j\neq i}\alpha(p_j,p_i)x_j=\beta_{-1}(p_i) %
\]
for all \(i\in\{1,\dotsc,\omega(D)\}\). Hence, both spaces have the same direction, and same dimension. The space \(\Af(-1,1)\) is not empty: it contains \((1,\dotsc,1)\).  It follows that \(\E(-1,1)\neq 0\) and, either \(\E(-2,2)=0\) or \(\E(-2,2)=\E(-1,1)\). We finish the proof thanks to~\eqref{eq_lienES}. %
\end{proof}

\section{Damey-Payan Spiegelungssatz}\label{sec_spiegel}
\subsection{Proof of the Spiegelungssatz}
We have to prove~\eqref{eq_DPun}, \eqref{eq_DPdeux} and \eqref{eq_DPtrois}. %

Consider the case \(d_{\K}\equiv 1\pmod{4}\). Recall that \(D=d_{\K}\). Thanks to~\eqref{eq_lienES}, equation~\eqref{eq_DPun} is %
\[%
 S_D(-1,1)\leq S_D(1,1)+S_D(2,2)\leq 2S_D(-1,1) %
\]
for any \(D\equiv 1\pmod{4}\). By corollary~\ref{cor_dun}, this inequality is equivalent to \(S_D(2,2)\leq S_D(1,1)\) and this last inequality is implied by corollary~\ref{cor_tard}.

Consider the case \(d_{\K}\equiv 0\pmod{8}\). Recall that \(D=d_{\K}/8\). Thanks to~\eqref{eq_lienES}, equation~\eqref{eq_DPdeux} is %
\[%
 S_D(2,1)\leq S_D(-2,1)+S_D(2,-1)\leq 2S_D(2,1) %
\]
for any \(D\). This is implied by lemma~\ref{lem_ineq} with \(u=2\). %

Finally, consider the case \(d_{\K}\equiv 4\pmod{8}\). Recall that \(D=d_{\K}/4\). Thanks to~\eqref{eq_lienES}, equation~\eqref{eq_DPtrois} is %
\[%
 S_D(-1,1)+S_D(-2,2)\leq S_D(1,1)\leq2S_D(-1,1)+2S_D(-2,2) %
\]
for any \(D\equiv 3\pmod{4}\). By corollary~\ref{cor_egFdpm}, this inequality is equivalent to \(S_D(-2,2)\leq S_D(-1,1)\) and this last inequality is implied by corollary~\ref{cor_plustard}.
\subsection{Some equality cases}
It is clear from our previous computations that %
\begin{itemize}
 \item if \(d_{\K}\equiv 1\pmod{4}\) then %
\[%
 \rkq(\reK)=\begin{cases}%
             \rkq(\K) & \text{if \(\E(2,2)=0\)}\\
	     \rkq(\K)+1 & \text{otherwise;}
            \end{cases}
\]
 \item if \(d_{\K}\equiv 4\pmod{8}\) then %
\[%
 \rkq(\reK)=\begin{cases}%
             \rkq(\K)+1 & \text{if \(\E(-2,2)=0\)}\\
	     \rkq(\K) & \text{otherwise.}
            \end{cases}
\]
\end{itemize}
We do not have such clear criterium in the case \(d_{\K}\equiv 0\pmod{8}\). The reason is that our study of the cases \(d_{\K}\equiv 1\pmod{4}\) and \(d_{\K}\equiv 4\pmod{8}\) rests on equalities (corollaries~\ref{cor_dun}, \ref{cor_egFdpm}, \ref{cor_tard} and \ref{cor_plustard}) whereas, our study of the case \(d_{\K}\equiv 0\pmod{8}\) rests on inequalities (lemma~\ref{lem_ineq} and mainly equation~\eqref{eq_huitL}). We study more explicitely special cases in proving the following proposition due to Uehara~\cite[Theorem 2]{MR987569} (the case~\ref{item_new} seems to be new). %
\begin{thm}%
 Let \(\K\) be a real quadratic field of discriminant \(d_{\K}\) and \(D\) be described in table~\ref{tab_link}. Suppose that every prime divisors of \(D\) is congruent to \(\pm 1\) modulo \(8\). Then %
\begin{enumerate}[\indent a)]%
 \item If \(d_{\K}\equiv 1\pmod{4}\), then \(\rkq(\reK)=\rkq(\K)+1\). %
 \item If \(d_{\K}\equiv 0\pmod{8}\) and \(D\equiv -1\pmod{4}\), then \(\rkq(\reK)=\rkq(\K)+1\). %
 \item\label{item_new} If \(d_{\K}\equiv 0\pmod{8}\) and \(D\equiv 1\pmod{4}\), then \(\rkq(\reK)=\rkq(\K)\). %
 \item If \(d_{\K}\equiv 4\pmod{8}\), then \(\rkq(\K)=\rkq(\reK)\). %
\end{enumerate}
\end{thm}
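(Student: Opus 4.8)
The plan is to reduce every one of the four cases to the affine description of lemma~\ref{lem_cardi}, then to feed the outcome into the Fouvry--Kl\"uners formulas for \(2^{\rkq(\K)}\) and \(2^{\rkq(\reK)}\) recalled in the introduction, using corollaries~\ref{cor_dun} and~\ref{cor_egFdpm} to compare \(S_D(1,1)\) with \(S_D(-1,1)\).

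First I would record the only piece of arithmetic needed: a prime \(p\) is congruent to \(\pm1\) modulo \(8\) if and only if \(\J{2}{p}=1\), i.e.\ \(\beta_2(p)=0\). So the hypothesis on \(D\) says exactly that \(\beta_2(p_i)=0\) for every \(i\). Since also \(\beta_1(p_i)=0\) and \(\beta_{-2}(p_i)=\beta_{-1}(p_i)+\beta_2(p_i)\) (the relation used in corollary~\ref{cor_plustard}), a glance at the system of lemma~\ref{lem_cardi} shows that, under this hypothesis, \(\Af(1,1)\), \(\Af(2,1)\) and \(\Af(2,2)\) are defined by one and the same system, and likewise \(\Af(-1,1)\), \(\Af(-1,2)\), \(\Af(-2,1)\) and \(\Af(-2,2)\) coincide. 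Hence
\[
 \E(1,1)=\E(2,1)=\E(2,2)\neq0,\qquad \E(-1,1)=\E(-1,2)=\E(-2,1)=\E(-2,2)\neq0,
\]
the non-vanishing and the last equalities of each group being the new content here beyond corollaries~\ref{cor_tard} and~\ref{cor_plustard} (the all-ones vector lies in \(\Af(1,1)\) and in \(\Af(-1,1)\), hence in all of the coinciding spaces). By~\eqref{eq_lienES} the same identities hold for the sums \(S_D\).

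Next I would treat the four cases, each time reading off from table~\ref{tab_link} whether \(D\) is \(d_\K\), \(d_\K/4\) or \(d_\K/8\) and substituting the identities above into the relevant formula. In case a) (\(D=d_\K\equiv1\pmod4\)): \(2^{\rkq(\reK)}=\frac{1}{2}(\E(1,1)+\E(2,2))=\E(1,1)\), and corollary~\ref{cor_dun} gives \(\E(1,1)=\E(-1,1)=2\cdot2^{\rkq(\K)}\), so \(\rkq(\reK)=\rkq(\K)+1\). In case d) (\(D=d_\K/4\equiv3\pmod4\)): \(2^{\rkq(\K)}=\frac{1}{2}(\E(-1,1)+\E(-2,2))=\E(-1,1)\) and \(2^{\rkq(\reK)}=\frac{1}{2}\E(1,1)\), and corollary~\ref{cor_egFdpm} gives \(\E(1,1)=2\E(-1,1)\), so \(\rkq(\reK)=\rkq(\K)\). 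In both cases \(d_\K\equiv0\pmod8\) (\(D=d_\K/8\)): \(2^{\rkq(\K)}=\frac{1}{2}(\E(-2,1)+\E(-1,2))=\E(-1,1)\) and \(2^{\rkq(\reK)}=\E(2,1)=\E(1,1)\), so \(\rkq(\reK)-\rkq(\K)\) is \(1\) when \(D\equiv-1\pmod4\) (corollary~\ref{cor_egFdpm}: \(\E(1,1)=2\E(-1,1)\)) and \(0\) when \(D\equiv1\pmod4\) (corollary~\ref{cor_dun}: \(\E(1,1)=\E(-1,1)\)). Since the hypothesis forces \(D\equiv\pm1\pmod8\), these two subcases exhaust \(d_\K\equiv0\pmod8\).

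The step that carries the weight is the second one: recognizing that the hypothesis is precisely \(\beta_2(p_i)=0\) and that this collapses the relevant affine spaces onto \(\Af(1,1)\) and \(\Af(-1,1)\), which in particular upgrades the conditional statements of corollaries~\ref{cor_tard} and~\ref{cor_plustard} to unconditional equalities. After that the four cases are mechanical; the only remaining care is the bookkeeping between \(d_\K\) and \(D\), and the non-emptiness of the spaces, both handled above.
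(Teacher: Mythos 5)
Your proposal is correct and follows essentially the same route as the paper: the hypothesis is translated into \(\beta_2(p_i)=0\), which collapses the affine systems of lemma~\ref{lem_cardi} so that \(\E(2,2)\), \(\E(2,1)\) agree with \(\E(1,1)\) and \(\E(-2,2)\), \(\E(-2,1)\), \(\E(-1,2)\) agree with \(\E(-1,1)\) (all nonzero via the all-ones vector), after which corollaries~\ref{cor_dun} and~\ref{cor_egFdpm} settle each congruence class. Your presentation merely unifies into one step what the paper distributes between the explicit equality-case criteria and the case-by-case computation.
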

\begin{proof}%
Since every prime divisors of \(D\) is congruent to \(\pm 1\) modulo \(8\), we have \(\beta_2(p_i)=0\) for any \(i\). %
\begin{itemize}%
\item If \(d_{\K}\equiv 1\pmod{4}\), then \(D\equiv 1\pmod{4}\). By lemma~\ref{lem_cardi}, we know that \(\E(2,2)\) is the cardinality of an affine space having equations %
\[%
 \sum_{j\neq i}\alpha(p_j,p_i)(x_i+x_j)=0\quad(1\leq i\leq\omega(D)) %
\]
hence it is non zero (\(x_i=1\) for any \(i\) gives a solution). %
\item If \(d_{\K}\equiv 0\pmod{8}\), then %
\[%
 2^{\rkq(\reK)-\rkq(\K)}=\frac{2\E(2,1)}{\E(-2,1)+\E(-1,2)}. %
\]
Since \(\beta_{-2}(p_i)=\beta_{-1}(p_i)\) for any \(i\), lemma~\ref{lem_cardi} shows that \(\E(-2,1)=\E(-1,2)=\E(-1,1)\). Lemma~\ref{lem_cardi} also shows that \(\E(2,1)=\E(1,1)\), hence %
\[%
 2^{\rkq(\reK)-\rkq(\K)}=\frac{\E(1,1)}{\E(-1,1)}. %
\]
If \(D\equiv -1\pmod{4}\), corollary~\ref{cor_egFdpm} implies that 
\[%
 2^{\rkq(\reK)-\rkq(\K)}=2 %
\]
whereas, if \(D\equiv 1\pmod{4}\), corollary~\ref{cor_dun} implies that 
\[%
 2^{\rkq(\reK)-\rkq(\K)}=1. %
\]
\item If \(d_{\K}\equiv 4\pmod{8}\), then \(D\equiv -1\pmod{4}\). By lemma~\ref{lem_cardi}, we know that \(\E(-2,2)\) is the cardinality of an affine space having equations %
\[%
 \beta_{-1}(p_i)x_i+\sum_{j\neq i}\alpha(p_j,p_i)(x_i+x_j)=\beta_{-1}(p_i)\quad(1\leq i\leq\omega(D)) %
\]
hence it is non zero (\(x_i=1\) for any \(i\) gives a solution). %
\end{itemize}
\end{proof}
\begin{rem}%
Probabilistic results have been given by Gerth\cite{MR1838376} and, for a more natural probability by Fouvry \& Klüners in \cite{MR2679097}. Among other results, Fouvry \& Klüners prove that %
\begin{multline*}%
 \lim_{X\to+\infty}\frac{\#\left\{d_{\K}\in\mathcal{D}(X)\colon \rkq(\reK)=s \vert \rkq(\K)=r\right\}}{\#\mathcal{D}(X)}\\=\begin{cases}%
                                                                                                                          1-2^{-r-1}&\text{if \(r=s\)}\\
															  2^{-r-1}&\text{if \(r=s-1\)}\\
															  0&\text{otherwise.}
                                                                                                                         \end{cases}
\end{multline*}
where \(\mathcal{D}(X)\) is the set of fundamental discriminants in \(]0,X]\). %
 
\end{rem}

\bibliographystyle{amsalpha}                                                                                                                                 
\bibliography{HaRo2}
\end{document}